\newtheorem{thm}{Theorem}[section]
\newtheorem{lemma}[thm]{Lemma}
\newtheorem{prop}[thm]{Proposition}
\theoremstyle{definition}
\newtheorem{defn}[thm]{Definition}
\newtheorem{remark}[thm]{Remark}
\numberwithin{equation}{section}
\newtheorem{example}[thm]{Example}
\newcommand{\bbR}{\mathbb R}
\newcommand{\bbN}{\mathbb N}
\newcommand{\bbS}{\mathbb S}
\begin{document}

\title{Entropy of geometric structures}

\author{Nguyen Tien Zung}
\address{Institut de Mathématiques de Toulouse, UMR5219, Université Toulouse 3}
\email{tienzung.nguyen@math.univ-toulouse.fr}

\date{Version 2, September 2011}
\subjclass{37J35,53D20,37G05,70K45,34C14}

\keywords{entropy, geometric structure, singular foliation, Poisson structure}%

\maketitle

\begin{abstract}
We give a notion of entropy for general gemetric structures, which generalizes well-known notions of topological 
entropy of vector fields  and geometric entropy of foliations, and which can also be applied to singular objects, 
e.g.  singular foliations, singular distributions,  and Poisson structures. 
We show some basic properties for this entropy, including the \emph{additivity property},
analogous to the additivity of Clausius--Boltzmann entropy in physics. In the case of Poisson structures, 
entropy is a new invariant of dynamical nature, which is related to the transverse structure of the characteristic 
foliation by symplectic leaves. 
\end{abstract}

\section{Geometric structures}

In this paper, by a {\bf geometric structure}, we mean a quadruple 
\begin{equation}
{\mathcal G} = (M, A, \|.\|, \sharp), 
\end{equation}
where $M$ is a manifold, $A$ is a vector bundle over $M$, $\|.\|$ is a Banach norm on the fibers of $A$, and 
$\sharp: A \to TM$ is a vector bundle morphism from $A$ to the tangent bundle of $M$, called the 
{\bf anchor map}.  For simplicity, in this paper we will consider only smooth finite-dimensional objects, and
will always assume that the manifold $M$ is connected. 
The manifold $M$ is the ``geo'' part, while the norm $\|.\|$ on $A$ together with the anchor map $\sharp$
form the  ``metric'' part  in the geometric structure $\mathcal{G}.$ For simplicity, in this paper we will always
assume that the manifold $M$ is connected.

The above notion of geometric structure generalizes many well-known notions in mathematics, including: 
vector fields, Riemannian and Finsler metrics, sub-Riemannian structures, singular folations, 
symplectic structures, Poisson structures,  etc.

\begin{example} \label{example:vector}
A vector field $X$ on a manifold $M$ may be viewed as a geometric structure  ${\mathcal G} = (M, A, \|.\|, \sharp)$ where
$A = \bbR \times M$ is the trivial vector bundle of rank 1, $\|.\|$ is the absolute value on (each fiber) 
$\bbR$, and the anchor map $\sharp$ maps the unit section of $A$ to $X$.
\end{example}

\begin{example}
If $M$ is equipped with a Riemannian metric, then the associated geometric structure has $A = TM$, 
the anchor map is the identity map, and the norm on $A$ is the length of the tangent vectors given by the metric.
\end{example}

\begin{example} \label{example:foliation}
If $M$ is a Riemannian manifold with a foliation $\mathcal{F}$ or a distribution $\mathcal{D}$ on it, then $A = T\mathcal{F}$
or $A = \mathcal{D}$, the anchor map is the inclusion map, and the norm is also given by the Riemannian metric.
\end{example}

\begin{example}
If $(M,\Pi)$ is a Poisson manifold with Poisson tensor $\Pi$, together with a Riemannian metric on it, then $A = T^*M$ 
is the cotangent bundle, the anchor map $\sharp : T^*M \to TM$ is the contraction map with $\Pi$, and the norm is also
given by the metric on $M$.
\end{example}

\section{Definition of entropy}

For simplicity, let us assume for the moment that $M$ is a closed compact manifold.

For each point $x \in M$, a (piece-wise smooth) path $\gamma: [0,1] \to M$ starting at $x$ 
(i.e. $\gamma(0) = x$) will be called {\bf $A$-controllable} if there is a (piecewise-smooth)
path $\Gamma: [0,1] \to A$ such that 
\begin{equation}
\sharp (\Gamma(t)) =  {d \gamma(t) \over d t} 
\end{equation}
for almost all $t \in [0,1].$ (This notion imitates the notion of A-paths in the theory of Lie algebroids and
Lie groupoids).    
Moreover, this path is said to be of {\bf $A$-speed at most $r$}, where $r$ is some 
positive number, if the path $\Gamma: [0,1] \to A$ above can be chosen such that 
\begin{equation}
\| \Gamma(t) \| \leq r 
\end{equation}
for almost all $t \in [0,1].$ By abuse of language,  a $A$-controllable path will also be called a {\bf $A$-path}.
Denote by 
\begin{equation}
\mathcal{P}(x, r) 
\end{equation}
 the set of all $A$-paths of $A$-speed at most $r$ starting from $x$.

Fix an arbitrary Riemannian metric on $M$ and denote the corresponding distance function on $M$ by $d$
(the entropy will not depend on the choice of this metric).
For any two points $x,y \in M$, put
\begin{equation}
\delta_r(x,y) = \sup_{\gamma \in \mathcal{P}(x,r)}  \inf _{\mu \in \mathcal{P}(y,r)} \sup_{t \in [0,1]} 
d(\gamma(t), \mu(t))  .
\end{equation}

Intuitively, the above quantity measures how far can $x$ get away from $y$ by running with $A$-speed at most  
$r$ if we let $y$ try to catch $x$, also with $A$-speed at most  
$r$.  A-priori, the function $\delta_r(x,y)$ is not 
symmetric in $x$ and $y$, so to make it symmetric we put

\begin{equation}
d_r(x,y) = \delta_r (x,y) + \delta_r(y,x) .
\end{equation}

\begin{prop}
For each $r > 0$, the function $d_r: M \times M \to \bbR$ is a metric on $M$, i.e. it is positive, symmetric, 
and satisfies the triangular inequality. Moreover, If $r > s > 0$ then $d_r \geq d_s \geq 2d.$
\end{prop}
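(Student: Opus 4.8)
\emph{Proof proposal.} The plan is to verify the metric axioms in increasing order of difficulty, and to handle the monotonicity in $r$ separately by a time-rescaling argument. First the elementary points. Since $M$ is compact, the Riemannian distance $d$ is bounded, so each inner quantity $\sup_{t}d(\gamma(t),\mu(t))$ is at most $\mathrm{diam}(M)$; hence $\delta_r$ is finite and $d_r$ is $\bbR$-valued. Symmetry is built into the definition $d_r(x,y)=\delta_r(x,y)+\delta_r(y,x)$. For positivity I would first note that the constant path at any point lies in every $\mathcal{P}(x,r)$, its control being the zero section (of norm $0\le r$); thus $\delta_r(x,x)=0$ by letting the chaser copy the evader, whence $d_r(x,x)=0$. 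Conversely, taking the evader to be the constant path $\gamma\equiv x$ gives $\delta_r(x,y)\ge\inf_{\mu\in\mathcal{P}(y,r)}\sup_{t}d(x,\mu(t))\ge d(x,y)$, because every competing $\mu$ satisfies $\mu(0)=y$. This single estimate $\delta_r(x,y)\ge d(x,y)$ yields both strict positivity for $x\ne y$ and, after symmetrizing, the bound $d_r\ge 2d$.

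For the triangle inequality it suffices to prove the directed inequality $\delta_r(x,z)\le\delta_r(x,y)+\delta_r(y,z)$, since adding to it the same inequality with the endpoints reversed gives $d_r(x,z)\le d_r(x,y)+d_r(y,z)$. To prove the directed version I would fix an evader $\gamma\in\mathcal{P}(x,r)$ and an $\eps>0$, use the definition of $\delta_r(x,y)$ to choose an intermediate path $\mu\in\mathcal{P}(y,r)$ with $\sup_{t}d(\gamma(t),\mu(t))\le\delta_r(x,y)+\eps$, and then use the definition of $\delta_r(y,z)$ applied to this $\mu$ to choose $\nu\in\mathcal{P}(z,r)$ with $\sup_{t}d(\mu(t),\nu(t))\le\delta_r(y,z)+\eps$. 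The pointwise triangle inequality for $d$ followed by a supremum gives $\sup_{t}d(\gamma(t),\nu(t))\le\delta_r(x,y)+\delta_r(y,z)+2\eps$; taking the infimum over $\nu$, the supremum over $\gamma$, and letting $\eps\to0$ finishes this step. The only point to watch is the order of quantifiers: the chaser $\mu$ from $y$ must be chosen depending on $\gamma$, and $\nu$ depending on $\mu$, which is exactly what the nested $\sup$--$\inf$ structure permits.

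The real difficulty, I expect, is the monotonicity $d_r\ge d_s$ for $r>s>0$. The naive attempt via the inclusion $\mathcal{P}(\cdot,s)\subseteq\mathcal{P}(\cdot,r)$ fails, because enlarging the path set helps the evader (larger outer $\sup$) but also helps the chaser (smaller inner $\inf$), so the two effects pull in opposite directions and no inequality between $\delta_r$ and $\delta_s$ is immediate. My plan is to prove $\delta_r\ge\delta_s$ by a rescaling trick that neutralizes the chaser's extra speed. Set $k=s/r\in(0,1)$. Given a speed-$s$ evader $\gamma$, define the sped-up evader $\hat\gamma(t)=\gamma(\min(t/k,1))$, which traverses $\gamma$ on $[0,k]$ and then rests; one checks $\hat\gamma\in\mathcal{P}(x,r)$, since rescaling time by $1/k$ multiplies the control norm by $1/k$, carrying $s$ to $r$. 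The key lemma to establish is that against this particular evader no speed-$r$ chaser beats the best speed-$s$ chaser against $\gamma$, that is, $\inf_{\nu\in\mathcal{P}(y,r)}\sup_{t}d(\hat\gamma(t),\nu(t))\ge\inf_{\mu\in\mathcal{P}(y,s)}\sup_{t}d(\gamma(t),\mu(t))$. I would prove this by the reverse rescaling: from any $\nu\in\mathcal{P}(y,r)$ form $\mu(u)=\nu(ku)$, which lies in $\mathcal{P}(y,s)$ and satisfies $d(\gamma(u),\mu(u))=d(\hat\gamma(ku),\nu(ku))$, so that $\sup_{u}d(\gamma(u),\mu(u))=\sup_{t\in[0,k]}d(\hat\gamma(t),\nu(t))\le\sup_{t\in[0,1]}d(\hat\gamma(t),\nu(t))$. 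Feeding $\hat\gamma$ into the outer supremum defining $\delta_r(x,y)$ then gives $\delta_r\ge\delta_s$, and summing the two directions yields $d_r\ge d_s$, completing the proof.
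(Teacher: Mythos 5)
Your proof is correct and follows essentially the same route as the paper: positivity via the pointwise bound $\delta_r(x,y)\ge d(x,y)$, and monotonicity in $r$ via the same reparametrization trick (speed up the evader to exploit the larger budget, slow down/truncate the chaser to compare with the speed-$s$ game). You additionally write out the chained $\eps$-argument for the triangle inequality, which the paper leaves as an easy consequence of the triangle inequality for $d$, and your quantifier arrangement avoids the paper's need to assume the suprema and infima are attained.
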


\begin{proof}
The fact that $d_r$ is symmetric is obvious. The positivity of $d_r$ comes from the inequality $d_r \geq 2d$,
which in turn comes from the inequality $\sup_{t \in [0,1]} 
d(\gamma(t), \mu(t)) \geq d(\gamma(0),\mu(0) ) = d(x,y)$ for any $\gamma \in \mathcal{P}(x,r)$ and any
$\mu \in \mathcal{P}(y,r)$.

If $r > s > 0$ then any $A$-controllable path $\gamma$ of $A$-speed at most $s$ can be turned into 
an $A$-controllable path $\hat{\nu}$ of $A$-speed at most $r$ by the following reparametrization trick:
$\hat{\nu}(t) = \nu(tr/s)$ if $0 \leq t \leq s/r$ and $\hat{\nu}(t) = \nu(1)$ if $s/r < t \leq 1.$ Conversely,
any path of speed at most $r$ can also be turned into a path of speed at most $s$ by truncating the end
and reparametrizing. With this reparametrization trick, one can easily see that $\delta_r(x,y) > \delta_s(x,y)$, 
and hence $d_r(x,y) \geq d_s(x,y).$ Indeed, fix two arbitray points $x$ and $y$, 
and assume that $\delta_s(x,y)$ is attained by a path $\gamma_0 \in \mathcal{P}(x,s),$ i.e.
\begin{equation}
\delta_r(x,y) =  \inf _{\mu \in \mathcal{P}(y,s)} \sup_{t \in [0,1]} 
d(\gamma_0(t), \mu(t))  
\end{equation}
(If the sup over all $\gamma \in \mathcal{P}(x,s)$ cannot be attained, 
then replace the above equality by a near-equality with an small error term $\epsilon$ 
and then make $\epsilon$ tend to 0 in the end; the other arguments will remain the same).
Denote by $\hat{\gamma_0} \in \mathcal{P}(x,r)$ the path obtained from $\gamma$ by reparametrizing as above. Let 
$\hat{\mu_0} \in \mathcal{P}(x,r)$ be such that
\begin{equation}
 \inf _{\mu \in \mathcal{P}(y,r)} \sup_{t \in [0,1]}  d(\hat{\gamma_0}(t), \mu(t))  = 
\sup_{t \in [0,1]}  d(\hat{\gamma_0}(t), \hat{\mu_0}(t))
\end{equation}
(If such $\hat{\mu_0}$ doesn't exist then replace the above equality by a near-equality so that it exists). Denote by 
$\mu_0 \in \mathcal{P}(y,s)$ the path obtained from $\hat{\mu_0}$ by truncating the end and reparametrizing. 
Then clearly we have
\begin{equation}
 \sup_{t \in [0,1]}  d({\gamma_0}(t), {\mu_0}(t)) \leq \sup_{t \in [0,1]}  d(\hat{\gamma_0}(t), \hat{\mu_0}(t)) =
 \inf _{\mu \in \mathcal{P}(y,r)} \sup_{t \in [0,1]}  d(\hat{\gamma_0}(t), \mu(t)),
\end{equation}
which implies that
\begin{equation}
 \delta_s(x,y) = \inf _{\mu \in \mathcal{P}(y,s)} \sup_{t \in [0,1]}  d({\gamma_0}(t), \mu(t))
\leq \inf _{\mu \in \mathcal{P}(y,r)} \sup_{t \in [0,1]}  d(\hat{\gamma_0}(t), \mu(t)) \leq \delta_r(x,y)
\end{equation}

It remains to show the triangular inequality for $d_r.$ But this inequality follows easily from the triangular inequality for $d$.
\end{proof}

For each $r > 0$ and $\epsilon > 0$, we denote by
\begin{equation}
N(d_r, \epsilon) 
\end{equation}
the maximal cardinal number of a set of points in $M$ such that for any two different points $x,y$ in this set we have
\begin{equation}
d_r(x,y) \geq \epsilon. 
\end{equation}
In other words, $N(d_r, \epsilon)$ is the maximal number of {\bf $\epsilon$-separated} in $M$ with respect to $d_r$. Since
$M$ is compact and $d_r$ is a metric, this number is finite. 

It is clear that, because $d_r$ is an increasing function with respect to $r$, $N(d_r, \epsilon) $ is also an increasing
function with respect to $r$ (and is decreasing with respect to $\epsilon$). We are interested in the ``rate of expansion''
of the metric $d_r$ when $r$ tends to infinity, via the asymptotic behavior of $N(d_r, \epsilon)$. In general,
$N(d_r, \epsilon)$ may grow exponentially with respect to $r$ (it is easy to see that it cannot grow 
faster than exponentially). So we put
\begin{equation}
 h(\mathcal{G},\epsilon) = \limsup_{r \to \infty} {\ln N(d_r, \epsilon) \over r}
\end{equation}
and
\begin{equation}
h(\mathcal{G}) = \lim_{\epsilon \to 0+} h(\mathcal{G},\epsilon).
\end{equation}

Since $N(d_r, \epsilon)$ cannot grow faster than exponentially, $h(\mathcal{G})$ is either a finite positive number or zero. 
Note that $h(\mathcal{G})$ does not depend on the choice of Riemannian metric $d$ on $M$. 
Indeed, if $d'$ is another Riemannian metric, then there is a positive constant $K >$ such that $d/K \leq d' \leq Kd$, 
which implies that $N(d_r, K\epsilon) \leq N(d'_r, \epsilon) \leq N(d_r, \epsilon/K)$ for any $r > 0$ and any $\epsilon > 0$, 
which in turn implies that  
$h_d(\mathcal{G}, K\epsilon) \leq  h_{d'}(\mathcal{G},\epsilon) \leq  h_d(\mathcal{G},\epsilon/K)$, so taking the limit 
$\epsilon \to 0$ we get $h_d(\mathcal{G}) = h_{d'}(\mathcal{G}).$

\begin{defn}
 The number $h(\mathcal{G})$ defined by the above formulas is called the {\bf entropy} of the geometric structure
${\mathcal G} = (M, A, \|.\|, \sharp).$
\end{defn}

\begin{remark}
We don't claim any originality to the above definition. It is just an adaptation of well-known definitions of entropy to 
general geometric structures. As we will see in the following sections, in the case of vector fields our 
entropy is equal to 2 times the  topological entropy, and in the case of regular foliations our 
entropy coincides with the geometric entropy first introduced by Ghys--Langevin--Walczak \cite{GLW}.
The new thing here may be the observation that the same definition works for many different geometric structures,
and in particular the singular ones. For Poisson structures, the entropy seems to be an interesting invariant which
has not been studied before. 
\end{remark}

Another a-priori non-equivalent way to define entropy is as follows: Put
\begin{equation}
\Delta_r(x,y) = \sup_{\gamma \in \mathcal{P}(x,r)}  \inf _{\mu \in \mathcal{P}(y, \infty)} \sup_{t \in [0,1]} 
d(\gamma(t), \mu(t))  
\end{equation}
(i.e. there is no restriction on the $A$-speed of $\mu$), and
\begin{equation}
D_r(x,y) = \Delta_r (x,y) + \Delta_r(y,x)
\end{equation}

It is clear that $d_r \geq D_r \geq 2d$, and $D_r$ is also symmetric. However, a-priori, $D_r$ does not necessarily
satisfy the triangular inequality, so it is not necessarily a metric, but a kind of pseudo-metric on $M$. Denote by
$N(D_r,\epsilon)$ the maximal number of $\epsilon$-separated points on $M$ with respect to $D_r$, and

\begin{equation}
H(\mathcal{G}) = \lim_{\epsilon \to 0+}  \limsup_{r \to \infty} {\ln N(D_r, \epsilon) \over r}
\end{equation}

This other notion of entropy $H(\mathcal{G})$ has some advantages (some properties are easier to prove for 
$H(\mathcal{G})$ than for $h(\mathcal{G})$). In many reasonable situations (for example, if the geometric 
structure is associated to a regular foliation) one can show that $H(\mathcal{G}) = h(\mathcal{G})$. 
However, we don't know if $H(\mathcal{G})$ is always equal to $h(\mathcal{G})$. In any case, 
we always have
\begin{equation}
H(\mathcal{G}) \leq  h(\mathcal{G}),
\end{equation}
because $D_r \leq d_r.$

The notion of entropy can be  adapted to the case of (subsets of) a non-necessarily compact manifold, and to the 
local case. If the manifold $M$ is non-compact, then $N(d_r, \epsilon)$ is infinite in general 
(unless a metric $d$ with finite diameter
is chosen). To avoid this, let
\begin{equation}
 V_1 \subset V_2 \subset \hdots \subset V_n \subset \hdots \subset M
\end{equation}
be a sequence of relatively compact subsets of $M$ such that $M = \bigcup_n V_n$, and denote by $N(d_r,\epsilon, P, V_n)$ 
the maximal number of $\epsilon$--$d_r$--separated points in $V_n.$ Then put
\begin{equation}
h(\mathcal{G}) = \lim_{n \to \infty} \lim_{\epsilon \to 0} \limsup_{r \to \infty} {\ln N(d_r,\epsilon, V_n) \over r},
\end{equation}
and call it the entropy of  $\mathcal{G}$ on $M$. (It is clear that the above definition does not depend on the choice
of the sequence $(V_n)_{n \in \bbN}$).
 
If $K$ is a compact subset in a manifold $M$, then we can define the {\bf local entropy} of $G$ at $K$ as follows. Choose two 
relatively compact open neighborhoods $U, V$ of $K$ such that $V$ contains the closure of $U$. Then define 
$\delta_r(x,y,U,V)$ as before, but with the following additional condition: the paths from $x$ must stay inside $U$, and the 
paths from $y$ must stay inside $V$. Put $d_r(x,y,U,V) = \delta_r(x,y,U,V) + \delta_r(y,x,U,V)$ for $x,y \in U$. 
Denote by $N(d_r,\epsilon,U,V)$ the maximal number of $\epsilon$-separated points in $U$ with respect to
$d_r(.,.,U,V)$, and put
\begin{equation}
h_{local}(\mathcal{G},K) = \lim_{V \to K} \lim_{U \to K} \lim_{\epsilon \to 0+} \limsup_{r \to \infty} 
{\ln N(d_r,\epsilon,U,V) \over r}.  
\end{equation}

For example,  the local entropy of a geometric structure associated to a regular foliation at a point is zero. But it is easy to
construct examples of a vector field with a singular point such that the local entropy of the corresponding geometric structure
at that singular point is non-zero.  For example, one may consider a vector field on $\bbR^n$ which commute s
with the radial vector field  $\sum_i x_i {\partial} / \partial x_i$, and which is tangent to the spheres centered at the origin,
and such that its topological entropy on a sphere is non-zero, and invoke Theorem \ref{thm:vector} of the next section.

\section{Some basic properties}

The entropy is a homogeneous function of degree -1 on the norm:

\begin{prop}[Homogeneity]
If we multiply the norm of a geometric structure $ (M,A, \|.\|, \sharp)$ by a positive constant $\gamma$,
then its entropy  will be divided by the same factor $\gamma$, i.e.
\begin{equation}
 h(M,A, \gamma \|.\|, \sharp) = h(M,A, \|.\|, \sharp) / \gamma.
\end{equation}
\end{prop}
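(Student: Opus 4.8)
The plan is to trace how rescaling the norm by $\gamma$ merely reparametrizes the speed bound, and then to absorb that reparametrization into a change of variables in the defining limit. Write $\mathcal{G}' = (M, A, \gamma\|.\|, \sharp)$ and $\|.\|' = \gamma\|.\|$ for the rescaled norm. First I would record the single substantive fact: a path admits a lift $\Gamma$ with $\|\Gamma(t)\|' \leq r$ for almost all $t$ if and only if it admits the same lift $\Gamma$ with $\|\Gamma(t)\| \leq r/\gamma$, since $\|\Gamma(t)\|' = \gamma\|\Gamma(t)\|$. Denoting by $\mathcal{P}'(x,r)$ the set of $A$-paths of $A$-speed at most $r$ measured with the new norm, this yields the exact identification
\begin{equation}
\mathcal{P}'(x,r) = \mathcal{P}(x, r/\gamma).
\end{equation}

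Next I would propagate this identity through the chain of definitions. Because the Riemannian distance $d$ on $M$ is left untouched---only the norm on the fibers of $A$ changes---the sup--inf--sup defining $\delta_r$ measures distances with the same $d$ but ranges over the rescaled path families $\mathcal{P}'(\cdot,r) = \mathcal{P}(\cdot, r/\gamma)$. Hence $\delta'_r(x,y) = \delta_{r/\gamma}(x,y)$, and consequently $d'_r = d_{r/\gamma}$ as metrics on $M$. Since the separation count $N(d_r,\epsilon)$ depends on $r$ only through the metric $d_r$, this gives immediately $N(d'_r,\epsilon) = N(d_{r/\gamma},\epsilon)$ for every $\epsilon > 0$ and every $r > 0$.

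Finally I would compute the entropy by the substitution $s = r/\gamma$, i.e.\ $r = \gamma s$. Using $h(\mathcal{G}',\epsilon) = \limsup_{r\to\infty} \ln N(d'_r,\epsilon)/r$ together with $N(d'_r,\epsilon) = N(d_{r/\gamma},\epsilon)$, the denominator $r$ becomes $\gamma s$, so
\begin{equation}
h(\mathcal{G}',\epsilon) = \limsup_{s\to\infty} \frac{\ln N(d_s,\epsilon)}{\gamma s} = \frac{1}{\gamma}\, h(\mathcal{G},\epsilon).
\end{equation}
Letting $\epsilon \to 0+$ gives $h(\mathcal{G}') = h(\mathcal{G})/\gamma$, as claimed. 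I expect no real obstacle here: the entire content is the scaling identity $\mathcal{P}'(x,r) = \mathcal{P}(x, r/\gamma)$, and the only point requiring a moment's care is that $\gamma$ is a \emph{positive} constant, so the substitution $s = r/\gamma$ preserves the direction $r\to\infty \Leftrightarrow s\to\infty$ and lets $1/\gamma$ be pulled out of the $\limsup$.
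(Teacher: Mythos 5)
Your proof is correct and follows essentially the same route as the paper: the paper's entire argument is the identity $d_r^{\mathcal{G}} = d_{\gamma r}^{\mathcal{G}'}$, which is exactly your $d'_r = d_{r/\gamma}$ written with the roles of $r$ shifted, and the rest is the same change of variables in the $\limsup$. You have simply spelled out the details (the identification of path families and the substitution $s = r/\gamma$) that the paper declares immediate.
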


\begin{proof}
It follows immediately from the equality 
\begin{equation}
d_r^\mathcal{G} = d_{\gamma r}^{\mathcal{G}'},
\end{equation}
 where $d_r^\mathcal{G}$ is the $d_r$ metric associated to $\mathcal{G} = (M,A, \|.\|, \sharp)$, and
$d_{\gamma r}^{\mathcal{G}'}$ is the $d_{\gamma r}$ metric associated to $\mathcal{G}' = (M,A, \gamma \|.\|, \sharp)$
(and the same inital metric $d$ on $M$).
\end{proof}

\begin{remark}
 The above proposition is also true for $H$. Moreover, it is easy to see that $H$ is monotone decreasing  with respect to
the norm, i.e. if  $\mathcal{G}_1 = (M,A, \|.\|_1, \sharp)$ and $\mathcal{G}_2 = (M,A, \|.\|_2, \sharp)$ such that
$\|.\|_1 \geq \|.\|_2$, then $D_r^{G_1} \leq D_r^{G_2}$ for any $r > 0$, therefore 
$H(\mathcal{G}_1) \leq H(\mathcal{G}_2).$ As a consequence, the property of having $H$ equal to zero doesn't
depend on the choice of the norm. We don't know if the same is true in general for the entropy $h(\mathcal{G})$ (except
in good cases, when we know that $h = H$).
\end{remark}

The following lemma is useful for detecting zero entropy:

\begin{lemma} \label{lemma:estimate}
 Let ${\mathcal G} = (M, A, \|.\|, \sharp)$ be a geometric structure on a compact manifold $M$. 
Then there is a positive constant $K$ such that, if $\rho$ is an arbitrary positive number and $x$ and $y$ are
two arbitrary points of $M$ which can be connected by a $A$-path (of time interval $[0,1]$) of speed at most $\rho$,
then for any $r > 0$ we have
\begin{equation} \label{eqn:dr_bounded}
D_r(x,y) \leq  d_r(x,y) \leq 2 K \rho.
\end{equation}
\end{lemma}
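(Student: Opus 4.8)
The plan is to reduce the whole statement to a single bound on $\delta_r$ and then build an explicit pursuer. Since the excerpt already gives $D_r \le d_r$, and $d_r(x,y) = \delta_r(x,y) + \delta_r(y,x)$, it is enough to bound each $\delta_r$ separately. The hypothesis is in fact symmetric in $x$ and $y$: reversing an $A$-path $\alpha$ via $t \mapsto \alpha(1-t)$, with control $t \mapsto -\Gamma(1-t)$, preserves the constraint $\|{-\Gamma(1-t)}\| = \|\Gamma(1-t)\| \le \rho$, so $x$ and $y$ are joined by an $A$-path of speed at most $\rho$ in either direction. Thus it suffices to produce a constant $K$, independent of $x,y,\rho,r$, with $\delta_r(x,y) \le K\rho$; then $d_r(x,y) \le 2K\rho$ and $D_r \le d_r$ closes the argument. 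I will obtain $\delta_r(x,y) \le 2C\rho$ below, so $K := 2C$ works.

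First I would extract the constant from compactness. Because $M$ is compact and $\sharp$ is a smooth bundle morphism, the fibrewise operator norm of $\sharp$, from $(A,\|\cdot\|)$ to $TM$ equipped with the chosen Riemannian norm $|\cdot|$, attains a finite maximum $C = \max_{p\in M}\|\sharp_p\|$, so that $|\sharp(a)| \le C\|a\|$ for every $a \in A$. Consequently an $A$-path of $A$-speed $\le s$ has Riemannian speed $\le Cs$, hence Riemannian length $\le Cs\ell$ over any time-subinterval of length $\ell$. In particular the given connecting path $\alpha$ has total Riemannian length $\le C\rho$, so $d(x,y) \le C\rho$ and every point of $\alpha$ lies within Riemannian distance $C\rho$ of $x$.

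Next I would exhibit, for each runner $\gamma \in \mathcal{P}(x,r)$, a pursuer $\mu \in \mathcal{P}(y,r)$ with $\sup_t d(\gamma(t),\mu(t)) \le 2C\rho$, splitting on $r$ versus $\rho$. If $r \ge \rho$, set $\tau = \rho/r \le 1$; on $[0,\tau]$ let $\mu(t) = \beta(t/\tau)$, where $\beta$ is the reversed connecting path from $y$ to $x$, whose $A$-speed is then $\le \rho/\tau = r$, reaching $x$ at time $\tau$; on $[\tau,1]$ let $\mu(t) = \gamma(t-\tau)$, the same runner with a time-lag $\tau$ and $A$-speed $\le r$. During the dash both $\gamma(t)$ and $\mu(t)$ lie within $C\rho$ of $x$, giving $d(\gamma(t),\mu(t)) \le 2C\rho$; during the shadowing $d(\gamma(t),\mu(t)) = d(\gamma(t),\gamma(t-\tau)) \le Cr\tau = C\rho$. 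If instead $r < \rho$, then $\gamma$ has total Riemannian length $\le Cr < C\rho$, so it never leaves the $C\rho$-ball about $x$, and the constant pursuer $\mu \equiv y$ (of speed $0$) gives $d(\gamma(t),y) \le d(\gamma(t),x) + d(x,y) \le Cr + C\rho < 2C\rho$. In either regime $\inf_\mu \sup_t d \le 2C\rho$, and taking the supremum over $\gamma$ yields $\delta_r(x,y) \le 2C\rho = K\rho$.

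The main obstacle, and the only genuinely delicate point, is that the pursuer must itself obey the $A$-speed bound $r$ while closing the initial gap between $y$ and $x$; this is exactly what forces the catch-up time $\tau = \rho/r$ and the case split at $r = \rho$, since for small $r$ the pursuer cannot reach $x$ within unit time and one must instead exploit that the runner is correspondingly sluggish. The remaining ingredients—reparametrizing an $A$-path to a prescribed time interval while tracking its control $\Gamma$ so that the speed bound is preserved pointwise, and passing from $A$-speed to Riemannian length through the anchor's operator norm—are routine once the compactness constant $C$ is fixed.
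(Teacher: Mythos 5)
Your proof is correct and follows essentially the same strategy as the paper's: reduce to bounding $\delta_r(x,y)$, build a pursuer that first dashes along the connecting path to reach $x$ by time $\rho/r$ and then shadows the runner with that time lag, and extract the constant from compactness of $M$. You are in fact slightly more careful than the paper, which writes ``assuming $r>\rho$'' without treating the case $r\le\rho$ and does not comment on reversing the connecting path to get the symmetric bound on $\delta_r(y,x)$; both points are handled cleanly in your argument.
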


\begin{proof}
It is enough to prove that
\begin{equation}
\delta_r(x,y) \leq K \rho. 
\end{equation}
 (The same will be true for $\delta_r(y,x)$). Fix a $A$-path $\eta: [0,1] \to M$ of speed at most $\rho$ which connects $y$ to $x$,
i.e. $\eta(0) = y$ and $\eta(1) = x$. Then for any path $\gamma \in \mathcal{P}(x,r)$, assuming $r > \rho$, we can construct
a path $\mu \in \mathcal{P}(y,r)$ as follows: $\mu(t) = \eta(r t / \rho)$ if $0 \leq t \leq \rho/r$, and $\mu(t) = \gamma(t- \rho/r)$
if $\rho/r < t \leq 1.$ In other words, we let $y$ try to catch $x$ by first following the path $\eta$ to get to $x$, 
and then following the same path as $x$. By doing so, $y$ always ``lags behind'' $x$ by a time amount equal to 
$\rho/r$ in its pursuit of $x$ (and its $A$-speed is at most $r$). It implies that we can put
\begin{equation}
K = \sup_{\rho > 0}  \sup_{\zeta \in \mathcal{P}(\rho)} { d(\zeta(1), \zeta(0))  \over \rho } ,
\end{equation}
where $\mathcal{P}(\rho)$ is the set of all $A$-paths of speed at most $\rho$. The above number $K$ is finite due to the 
compactness and smoothness of our structure.
\end{proof}

The above lemma shows that, for points which can be connected by $A$-paths, the $d_r$ distance doesn't grow much at all when
$r$ tends to infinity. So, intuitively, for $d_r$ to grow exponentially, we need points which cannot be connected by $A$-paths.
In terms of control theory, we say that $M$ is {\bf controllable} by $\mathcal{G}$ if any two points on $M$ can be connected
by a $A$-path. So intuitively, the entropy must vanish in the controllable case. We have the following precise statement,
which is probably not the optimal one, but which shows the idea clearly:

Recall that a regular (smooth) distribution $\mathcal{D}$ on a manifold $M$ is nothing but a subbundle of the tangent
bundle $TM$. One says that $\mathcal{D}$ satisfies the bracket-generating condition if  repeated Lie brackets of 
vector fields tangent to $\mathcal{D}$ generate the whole $TM$ linearly. 

\begin{thm}
Let $\mathcal{D}$ be a regular distribution on $M$  which satisfies the bracket-generating condition.
Denote by $\mathcal{G}^\mathcal{D}$ the geometric structure associated to $\mathcal{D}$ 
(i.e., the vector bundle is $\mathcal{D}$, the anchor map is the inclusion map)  with an arbitrary norm. 
Then we have
\begin{equation}
h(\mathcal{G}^\mathcal{D}) = H(\mathcal{G}^\mathcal{D}) = 0. 
\end{equation}
\end{thm}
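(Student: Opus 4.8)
The plan is to reduce everything to Lemma \ref{lemma:estimate} by way of the Chow--Rashevskii theorem. First I would recall that, since $\mathcal{D}$ satisfies the bracket-generating condition on the connected manifold $M$, the Chow--Rashevskii theorem guarantees that any two points of $M$ can be joined by a horizontal curve, i.e.\ a curve everywhere tangent to $\mathcal{D}$, and that the associated sub-Riemannian (Carnot--Carath\'eodory) distance $d_{CC}$ is finite and induces the original manifold topology. Because $M$ is compact, $d_{CC}$ has finite diameter $D := \sup_{x,y \in M} d_{CC}(x,y) < \infty$.

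Next I would match the notion of $A$-speed with $d_{CC}$. For the structure $\mathcal{G}^{\mathcal{D}}$ the anchor is the inclusion and the norm is the Riemannian one restricted to $\mathcal{D}$, so an $A$-path of $A$-speed at most $\rho$ defined on $[0,1]$ is exactly a horizontal curve of sub-Riemannian length at most $\rho$. Hence, given $x,y$ with $d_{CC}(x,y)=L$, a nearly length-minimizing horizontal curve reparametrized to constant speed joins $x$ to $y$ with $A$-speed at most $L+\epsilon'$. Applying Lemma \ref{lemma:estimate} with $\rho = L+\epsilon'$ (legitimate once $r$ is large, since $\rho \leq D+\epsilon'$, which is the only regime we need as $r\to\infty$) and letting $\epsilon'\to 0$ gives the uniform bound
\begin{equation}
d_r(x,y) \leq 2K\, d_{CC}(x,y) \qquad \text{for } r \text{ large and all } x,y \in M.
\end{equation}

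The decisive step is then to convert this into an $r$-independent bound on separated sets. Since $d_{CC}$ is continuous for the manifold topology (Chow--Rashevskii) and $M$ is compact, the function $(x,y)\mapsto d_{CC}(x,y)$ is uniformly continuous with respect to the Riemannian distance $d$; as $d_{CC}(x,x)=0$, for every $\epsilon>0$ there is $\delta>0$ with $d(x,y)<\delta \Rightarrow d_{CC}(x,y) < \epsilon/(2K)$, hence $d_r(x,y)<\epsilon$ for all large $r$. Contrapositively, any two points that are $\epsilon$-separated for $d_r$ are $\delta$-separated for $d$, so
\begin{equation}
N(d_r,\epsilon) \leq N(d,\delta) \qquad \text{for all large } r,
\end{equation}
where the right-hand side is a fixed finite number depending only on the compact manifold $(M,d)$ and on $\delta$, not on $r$.

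Finally I would feed this into the definition of entropy. The bound above gives $h(\mathcal{G}^{\mathcal{D}},\epsilon) = \limsup_{r\to\infty} \tfrac{\ln N(d_r,\epsilon)}{r} \leq \limsup_{r\to\infty} \tfrac{\ln N(d,\delta)}{r} = 0$ for every $\epsilon>0$, whence $h(\mathcal{G}^{\mathcal{D}}) = \lim_{\epsilon\to 0+} h(\mathcal{G}^{\mathcal{D}},\epsilon) = 0$; since $H \leq h$ always, $H(\mathcal{G}^{\mathcal{D}})=0$ as well. The only substantial input beyond Lemma \ref{lemma:estimate} is Chow--Rashevskii, and the single point requiring care is the \emph{uniform-in-$r$} packing estimate: it is essential that the comparison $d_r \leq 2K\, d_{CC}$ hold with a constant independent of $r$, which is exactly what Lemma \ref{lemma:estimate} supplies, together with the compactness of $M$ making the continuity of $d_{CC}$ uniform. (One could alternatively quote the ball--box theorem to obtain the sharper H\"older comparison $d_{CC}(x,y) \leq C\, d(x,y)^{1/s}$, but only the uniform continuity is needed here.)
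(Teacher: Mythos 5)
Your proof is correct and follows essentially the same route as the paper: connect nearby points by short horizontal paths, invoke Lemma \ref{lemma:estimate} to bound $d_r$ uniformly in $r$, and conclude that $N(d_r,\epsilon)$ stays bounded as $r\to\infty$. The only difference is that you cite the Chow--Rashevskii theorem (continuity of $d_{CC}$) where the paper uses its quantitative refinement, the Ball--Box theorem; the weaker input suffices, and your uniform-in-$r$ packing step is if anything spelled out more carefully than in the paper.
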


\begin{proof}
The proof follows easily from Lemma \ref{lemma:estimate} and the \emph{Ball-Box Theorem} in sub-Riemannian geometry. 
The Ball-Box Theorem (see, e.g., \cite{Gromov-BallBox}) says that, under the bracket-generating
condition, and given a Riemannian metric on the manifold, at each point of the manifold there is a local coordinate system 
$(x_1,\hdots,x_m)$ such that, for any  sufficiently small positive  $\epsilon$, 
the ball of radius $\epsilon$  with respect to the corresponding sub-Riemannnian metric 
centered at that point contains a box 
$ [- c\epsilon^{k_1}, c\epsilon^{k_1}] \times \hdots \times [- c\epsilon^{k_m}, c\epsilon^{k_m}]$
with respect to that local coordinate system, and is contained in a similar box (with another coefficient $c$), where 
$k_1,\hdots, k_m$ are positive integers and $c$ is a positive constant which do not depend on $\epsilon$. 
This Ball-Box Theorem implies in particular that, if the
distribution $\mathcal{D}$ is bracket-generating, then there is a finite number $n > 0$ such that, 
for any $\rho > 0$ small enough and any $x,y$ in M such that $d(x,y) < \rho^n$, then $x$ can be connected to $y$ by a  
$\mathcal{D}$-path of speed at most $\rho$. Together with Lemma \ref{lemma:estimate}, it implies that
$N(d_r, \epsilon)$ is bounded for each $\epsilon > 0$ (i.e. it doesn't grow to infinity at all when $r$ goes to infinity).
Therefore the entropy is zero.
\end{proof}

\begin{remark}
A special case of the above theorem, namely the case when $\mathcal{D}$ is a contact structure,
was obtained earlier by Bi\'s in \cite{Bis-Contact} (modulo the fact that, in the case of regular distributions, 
our definition of entropy is equivalent to Bi\'s' definition \cite{Bis-Contact}). 
\end{remark}

Another simple consequence of Lemma \ref{lemma:estimate} is the following:

\begin{thm}
If the anchor map $\sharp: A \to TM$ of a geometric structure  ${\mathcal G} = (M, A, \|.\|, \sharp)$ is surjective, 
then the entropy of $\mathcal{G}$ is zero.
\end{thm}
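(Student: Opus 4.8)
The plan is to reduce this statement to the previous theorem (or directly to Lemma~\ref{lemma:estimate}) by showing that surjectivity of the anchor map forces $M$ to be controllable with a uniform speed bound. First I would observe that if $\sharp: A \to TM$ is surjective at every point, then every tangent vector $v \in T_xM$ is of the form $\sharp(\xi)$ for some $\xi \in A_x$, and by the compactness of $M$ together with the smoothness and finite-dimensionality of the bundle $A$, one can choose such a lift $\xi$ with $\|\xi\| \leq C\,|v|$ for a single constant $C$ independent of the point $x$ (here $|v|$ denotes the length measured by the fixed Riemannian metric $d$). This uniform bound is what makes the argument work: it says that moving along any smooth curve in $M$ can be realized as an $A$-path whose $A$-speed is controlled by the ordinary Riemannian speed of the curve.

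The second step is to turn this pointwise lifting into a statement about paths. Given any two points $x, y \in M$, connect them by a minimizing Riemannian geodesic of length $\ell = d(x,y)$, and lift it to an $A$-path using the uniform bound above; after reparametrizing to the interval $[0,1]$, this produces an $A$-path connecting $x$ to $y$ of $A$-speed at most $\rho := C\ell = C\,d(x,y)$. Since $M$ is compact it has finite diameter, so in fact any two points are connected by an $A$-path of speed at most $\rho_0 := C \cdot \mathrm{diam}(M)$, a fixed constant. Thus $M$ is controllable by $\mathcal{G}$, and more precisely with a uniform speed bound independent of the pair of points.

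Now I would invoke Lemma~\ref{lemma:estimate}. For any two points $x,y \in M$ and any $r > 0$, since $x$ and $y$ are connected by an $A$-path of speed at most $\rho_0$, the lemma gives $d_r(x,y) \leq 2K\rho_0$, a bound independent of $r$. This means the metrics $d_r$ are uniformly bounded above by the constant $2K\rho_0$ for all $r$. Consequently, for any fixed $\epsilon > \ldots$ — more carefully, fix $\epsilon > 0$; a set of $\epsilon$-separated points with respect to $d_r$ lives inside a metric space of diameter at most $2K\rho_0$, so by a standard volume/packing argument (or simply because the number of $\epsilon$-separated points in any totally bounded metric space of bounded diameter is bounded) the quantity $N(d_r, \epsilon)$ stays bounded by a constant depending only on $\epsilon$ and not on $r$. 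Hence $\ln N(d_r,\epsilon)/r \to 0$ as $r \to \infty$, so $h(\mathcal{G},\epsilon) = 0$ for every $\epsilon > 0$, giving $h(\mathcal{G}) = 0$; since $H \leq h$, we also get $H(\mathcal{G}) = 0$.

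The main obstacle, and the point deserving the most care, is the uniform lifting bound in the first step. Surjectivity of $\sharp$ at each point guarantees only a pointwise lift, and on a non-compact base the norm of the smallest lift could blow up; it is the compactness of $M$ (and the continuity of the fibrewise operator norm of a right inverse to $\sharp$, which exists and varies smoothly once $\sharp$ has constant rank equal to $\dim M$) that upgrades this to a global constant $C$. One should note that surjectivity is an open condition that, together with the rank-nullity count, ensures $\sharp$ has locally constant rank $\dim M$; combined with connectedness of $M$ this makes the construction of a bounded right inverse routine, so no genuine difficulty arises beyond this observation. Everything after that is a direct application of Lemma~\ref{lemma:estimate} exactly as in the bracket-generating case.
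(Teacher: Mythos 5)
Your proposal is correct and follows essentially the same route as the paper, which simply invokes the uniform bound \eqref{eqn:dr_bounded} of Lemma \ref{lemma:estimate}; you supply the one detail the paper leaves implicit, namely that surjectivity of $\sharp$ plus compactness yields a uniform lifting constant $C$ and hence connectability of any two points by an $A$-path of speed at most $C\,d(x,y)$. One small phrasing to tighten: bounded $d_r$-diameter alone does not bound the number of $\epsilon$-separated points, but your own estimate $\rho = C\,d(x,y)$ gives $d_r \leq 2KC\,d$, so a $d_r$-$\epsilon$-separated set is $d$-$(\epsilon/2KC)$-separated and its cardinality is bounded independently of $r$ by compactness of $(M,d)$, which is what the conclusion actually needs.
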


In particular, the entropy of a symplectic structure is zero. The proof follows immediately from Inequaltiy 
(\ref{eqn:dr_bounded}), which shows that the metrics $d_r$ are uniformly bounded with respect to $r$ in this case,
so no exponential growth (not even polynomial growth).

Intuitively, the entropy is mainly an invariant of the \emph{transverse structure} of  the ``singular foliation'' generated 
by the geometric structure, where each leaf is defined as the set of points which can be reached from a given point
by  $A$-paths. If this transversal structure is ``tame'' then the entropy will probably be zero, otherwise it will probably be
positive. 

Examples of geometric structure with non-zero entropy are provided by vector fields with non-zero 
topological entropy (it is well known in dynamical systems that most vector fields, 
especially the ones with hyperbolic behavior, have non-zero entropy), and the following theorem:

\begin{thm}[Topological entropy] \label{thm:vector}
 Let $X$ be a smooth vector field on a compact manifold $M$, and denote by $\mathcal{G}_X$ the associated
geometric structure (see Example \ref{example:vector}). Then we have
\begin{equation}
 h(\mathcal{G}_X) = H(\mathcal{G}_X) = 2 h_{top} (X),
\end{equation}
where $h_{top}(X)$ denotes the topological entropy of $X$.
\end{thm}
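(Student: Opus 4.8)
The plan is to make the definition of $A$-path fully explicit for a vector field and then identify the symmetrized pursuit metric $d_r$ with the Bowen metric of the flow of $X$ over a time window of length $2r$; the factor $2$ will come from this doubled window, and the equality with $2h_{top}(X)$ will follow from the standard separated-set description of the topological entropy of a flow.

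First I would unwind the construction. With $A=\bbR\times M$ and anchor $\sharp(s,x)=sX(x)$, every $\gamma\in\mathcal{P}(x,r)$ is a reparametrized orbit: if $\phi$ denotes the flow of $X$ then $\gamma(t)=\phi_{\tau(t)}(x)$ with $\tau(0)=0$ and $|\dot\tau(t)|\le r$ almost everywhere. Hence having $A$-speed at most $r$ means the flow-time $\tau$ moves at rate at most $r$ in either direction, so $\tau(t)\in[-r,r]$ and $\gamma$ runs inside the orbit segment $\phi_{[-r,r]}(x)$. The evader may move forward or backward along its orbit, and this two-sidedness is what produces the factor $2$.

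For the upper bound $h(\mathcal{G}_X)\le 2h_{top}(X)$ I would use a copying strategy for the pursuer. Given any $\gamma(t)=\phi_{\tau(t)}(x)$, the path $\mu(t)=\phi_{\tau(t)}(y)$ lies in $\mathcal{P}(y,r)$ and gives $\sup_t d(\gamma(t),\mu(t))\le \max_{|s|\le r}d(\phi_s x,\phi_s y)=:\rho_r(x,y)$, so $\delta_r(x,y)\le\rho_r(x,y)$ and, by symmetry of $\rho_r$, $d_r(x,y)\le 2\rho_r(x,y)$. Since $\rho_r$ is exactly the Bowen metric over the window $[-r,r]$, which has length $2r$, any $\epsilon$-$d_r$-separated set is $(\epsilon/2)$-separated for $\rho_r$; writing $s_{\mathrm{top}}(L,\eta)$ for the maximal Bowen-separated number over a window of length $L$ at scale $\eta$, and using $N(\rho_r,\eta)=s_{\mathrm{top}}(2r,\eta)$ (conjugating the window by the homeomorphism $\phi_{-r}$), we get $N(d_r,\epsilon)\le s_{\mathrm{top}}(2r,\epsilon/2)$. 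Therefore $\limsup_{r\to\infty}r^{-1}\ln N(d_r,\epsilon)\le 2\,h_{top}(X,\epsilon/2)$, and letting $\epsilon\to 0$ gives $h(\mathcal{G}_X)\le 2h_{top}(X)$; since $H\le h$ always, also $H(\mathcal{G}_X)\le 2h_{top}(X)$.

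It then remains to prove the reverse inequality $H(\mathcal{G}_X)\ge 2h_{top}(X)$, for the sandwich $2h_{top}(X)\le H(\mathcal{G}_X)\le h(\mathcal{G}_X)\le 2h_{top}(X)$ forces all three to be equal. I would start from a maximal $(\epsilon,[-r,r])$-Bowen-separated set $E$, of cardinality $\approx e^{2r\,h_{top}(X,\epsilon)}$, and try to show that a large subset of $E$ is $D_r$-separated at a scale comparable to $\epsilon$. Here one computes, for the unbounded pursuer, $\Delta_r(x,y)=\max_{|s|\le r}\mathrm{dist}(\phi_s x,\mathcal{O}_y)$, where $\mathcal{O}_y$ is the orbit of $y$. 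The hard part is precisely this step: Bowen-separation controls the synchronized distance $d(\phi_s x,\phi_s y)$, whereas $D_r$ only sees the distance from $\phi_s x$ to the whole orbit of $y$, so a large synchronized gap could in principle be masked by a differently-timed point of $\mathcal{O}_y$. I would close the gap by a re-synchronization argument: if $D_r(x,y)$ is small then $\phi_{[-r,r]}(x)$ shadows $\mathcal{O}_y$ pointwise, and on the region where the flow speed is bounded below this forces a reparametrization that differs from a genuine time-shift $c$ by a controlled amount with $|c|$ bounded independently of $r$; after applying the shift, synchronized closeness holds on a window of length $2r-O(1)$, contradicting Bowen-separation at small scale. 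Neighborhoods of the zero set of $X$ must be handled separately, which is harmless because short $A$-paths keep $d_r$ bounded there by Lemma \ref{lemma:estimate}. Bounding the drift $c$ uniformly — ruling out slow sliding along nearly-recurrent orbits — is the delicate point, and would rely on the compactness of $M$ and uniform continuity of $\phi$. Combining the two bounds yields $h(\mathcal{G}_X)=H(\mathcal{G}_X)=2h_{top}(X)$.
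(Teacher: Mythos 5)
Your upper bound is correct and coincides with the paper's: the copying strategy $\mu(t)=\phi^{\tau(t)}_X(y)$ shows $d_r\leq 2\mathbb{D}_r$, where $\mathbb{D}_r(x,y)=\max_{|s|\leq r}d(\phi^s_X(x),\phi^s_X(y))$ is the Bowen metric over the window $[-r,r]$, and the doubled window length produces the factor $2$. The gap is in the lower bound, and it sits exactly at the step you flag as delicate. The claim you would need --- that for an \emph{arbitrary} pair $(x,y)$ in a Bowen-separated set, smallness of $D_r(x,y)$ forces synchronized closeness after a time-shift $c$ with $|c|$ bounded independently of $r$ --- is false. Take $X$ tangent to a family of parallel periodic orbits with varying periods (e.g.\ $X=f(s)\,\partial/\partial\theta$ on an annulus): for $x,y$ on nearby circles the unconstrained pursuer can always sit at the point of $\mathcal{O}_y$ nearest to $\gamma(t)$, so $\Delta_r(x,y)$ stays of order $d(x,y)$ for all $r$, while the synchronized distance $\max_{|s|\leq r}d(\phi^s_X(x),\phi^s_X(y))$ grows to the diameter of the circles once the phases decorrelate, and the shift needed to re-synchronize grows linearly in $r$. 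So Bowen-separation at a fixed scale does not imply $D_r$-separation at a comparable scale for arbitrary pairs, and no uniform bound on the drift exists; compactness and uniform continuity of the flow cannot rescue this.

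The paper's resolution, which your outline is missing, is to prove the comparison only for pairs that are already close in the background metric: there are constants $K,k>0$ such that if $D_r(x,y)\leq k$ then $\mathbb{D}_r(x,y)\leq 2\max\bigl(D_r(x,y),\,(Kr+1)\,d(x,y)\bigr)$. This is the content of the lemma inside the paper's proof (a uniform rectification/flow-box argument): the hypothesis $d(x,y)<\epsilon/(Kr+1)$ keeps the accumulated time-drift below $\epsilon$ over the whole window, so the optimal unconstrained pursuit is, up to a factor $2$, already the synchronized one. One then covers $M$ by $d$-balls of radius about $\epsilon/(Kr+1)$; this costs only polynomially many balls in $r$, so some ball contains at least a $1/\mathrm{poly}(r)$ fraction of any maximal $2\epsilon$--$\mathbb{D}_r$--separated set, and within that ball the displayed inequality converts $\mathbb{D}_r$-separation into $D_r$-separation at scale $\min(\epsilon,k)$. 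The polynomial loss is invisible at the level of exponential growth rates, which is what makes the lower bound $H(\mathcal{G}_X)\geq 2h_{top}(X)$ go through. Your argument needs this localization-plus-polynomial-covering step in place of the uniform re-synchronization claim.
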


\begin{proof}
 Recall the following formula, due to  Bowen and Dinaburg (see, e.g., \cite{Dinaburg-Entropy}), for topological
entropy: Denote by $N(X,r,\epsilon)$ the maximal number of $\epsilon$-separated points with respect to $d^X_r$ on $M$,
where 
\begin{equation}
 d^X_r (x,y) = \max_{t \in [0,r]} d(\phi^t_X(x), \phi^t(y)),
\end{equation}
where $d$ is a fixed Riemannian metric on $M$, and $\phi^t_X$ is the flow of the vector field $X$. Then
\begin{equation}
 h_{top} (X) = \lim_{\epsilon \to 0+} \limsup_{r \to \infty}  {\ln N(X,r,\epsilon) \over r} .
\end{equation}
A $A$-path with respect to $\mathcal{G}_X$ is simply a piece of an orbit of $X$ on which a point $x$ can move 
back and forth. If a point $x$ moves back and forth, then a point $y$ which pursuits it can also imitate the same back 
and forth movement (i.e. if $\gamma$ is a path from $x$ then we can also
choose a path $\mu$ from $y$ in pursuit of $x$ such that $\mu(t) = \mu(s)$ whenever $\gamma(t) = \gamma(s)$).
So the ``maximal escape'' is achieved when $x$ either moves forwards all the time or moves backwards
all the time, with maximal possible speed. In other words, we have
\begin{equation}
\delta_r(x,y) = \max \left(  \inf _{\mu \in \mathcal{P}(y,r)} \sup_{t \in [0,1]} 
d(\phi^{tr}_X(x), \mu(t)) , \inf _{\mu \in \mathcal{P}(y,r)} \sup_{t \in [0,1]} 
d(\phi^{-tr}_X(x), \mu(t)) \right),
\end{equation}  
and a similar formula for $\Delta_r(x,y)$.  It is clear that
\begin{equation}
\sup_{t \in [0,1]} 
d(\phi^{tr}_X(x), \phi^{tr}_X(y)) \geq \inf _{\mu \in \mathcal{P}(y,r)} \sup_{t \in [0,1]} 
d(\phi^{tr}_X(x), \mu(t)) \geq \inf _{\mu \in \mathcal{P}(y)} \sup_{t \in [0,1]} 
d(\phi^{tr}_X(x), \mu(t)). 
\end{equation}
On the other hand, we have the following simple lemma, whose proof is a direct consequence of the (uniform version of)
the rectification theorem for vector fields:
\begin{lemma}
 There are positive constants $K,k$ such that, if 
$$\inf _{\mu \in \mathcal{P}(y)} \sup_{t \in [0,1]} 
d(\phi^{tr}_X(x), \mu(t)) = \epsilon < k$$ 
and $$d(x,y) < {\epsilon \over Kr + 1},$$ 
then $$d(\phi^{tr}_X(x), \phi^{tr}_X(y)) \leq 2 \epsilon .$$
\end{lemma}
In other words, the ``nearly optimal pursuit path'' is already provided by $\phi^{tr}_X(y).$
It follows from the above inequalities that $d_{r}(x,y)$ and $D_r(x,y)$ are ``comparable'' to
\begin{equation}
\mathbb{D}_r(x,y) := \max \left( \sup_{t \in [0,1]} 
d(\phi^{tr}_X(x), \phi^{tr}_X(y)), \sup_{t \in [0,1]} 
d(\phi^{-tr}_X(x), \phi^{-tr}_X(y)) \right).  
\end{equation}
More precisely, we have
\begin{equation}
2 \mathbb{D}_r(x,y) \geq d_r(x,y) \geq D_r(x,y),
\end{equation}
and there exist constants $K, k > 0$ (which do not depend on $r$) such that if $D_r(x,y) \leq k$ then
\begin{equation}
\mathbb{D}_r(x,y) \leq 2 \max \left(D_r(x,y), (Kr +1) d(x,y) \right)
 \end{equation}
Since we are interested only in the exponential behavior of the (pseudo)metrics with respect to $r$, the terms
$(Kr+1)d(x,y)$ in the last inequality can be ignored, because it is only polynomial in $r$. It implies that
$d_r, D_r$ and $\mathbb{D}_r$ give the same entropy. Now $\mathbb{D}_r$ give twice the topological
entropy, because by letting the points move both forwards and backwards, we have ``doubled the time'':
\begin{equation}
\mathbb{D}_r(x,y) = \sup_{t \in [0,2]} 
d(\phi^{tr}_X( \phi^{-r}_X(x)), \phi^{tr}_X( \phi^{-r}_X(y)))
\end{equation}
\end{proof}

More generally, we have:

\begin{thm}[Geometric entropy] \label{thm:foliation}
Let $\mathcal{F}$ be a regular foliation on a Riemannian manifold $(M,g)$, and denote by
$\mathcal{G}_\mathcal{F}$ the associated geometric structure of $\mathcal{F}$ (together with the norm
coming from the metric). Then
\begin{equation}
 h(\mathcal{G}_\mathcal{F}) = H(\mathcal{G}_\mathcal{F}) = h_{GLW}(\mathcal{F}),
\end{equation}
where $h_{GLW}$ denotes the geometric entropy introduced 
by Ghys--Langevin--Walczak \cite{GLW} for regular foliations.
\end{thm}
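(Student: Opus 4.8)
The plan is to establish the two-sided squeeze $h_{GLW}(\mathcal{F}) \le H(\mathcal{G}_\mathcal{F}) \le h(\mathcal{G}_\mathcal{F}) \le h_{GLW}(\mathcal{F})$, in which the middle inequality is the general bound $H \le h$ already proved. The starting observation is that for $\mathcal{G}_\mathcal{F}$ an $A$-path is exactly a leaf-wise path (a path tangent to $\mathcal{F}$) and its $A$-speed is its leaf-wise Riemannian speed, so $\mathcal{P}(x,r)$ consists of the leaf-wise paths of length at most $r$ starting at $x$. Consequently a pursuer $\mu$ from $y$ is confined to the leaf through $y$, and when $x$ and $y$ lie on \emph{different} leaves the pursuer can only adjust its leaf-wise position, while its transverse displacement relative to $\gamma$ is rigidly fixed by the holonomy of $\mathcal{F}$. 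This rigidity is what ties $d_r$ and $D_r$ to the holonomy pseudogroup that underlies $h_{GLW}$.

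First I would fix a finite good cover of the compact manifold $M$ by foliation boxes, with its complete transversal and holonomy pseudogroup, and recall that $h_{GLW}(\mathcal{F})$ is the exponential growth rate in $R$ of the maximal number of transversal points that are pairwise separated by holonomy elements traced along plaque chains of length at most $R$. Since $M$ is compact the plaques have uniformly bounded size, so the leaf-wise length of a path and the number of plaques it crosses are comparable up to a fixed constant; hence the parameter $r$ (leaf-wise length) and the GLW length $R$ are interchangeable up to a linear factor, which does not affect the exponential rate. The technical heart is a \emph{holonomy pursuit lemma}, the foliated analogue of the rectification lemma used in the proof of Theorem \ref{thm:vector}: there are uniform constants $K,k>0$ such that if $y$ is near $x$ and $\gamma \in \mathcal{P}(x,r)$, then the leaf-wise path $\mu$ obtained by transporting $y$ parallel to $\gamma$ along the holonomy of the plaque chain of $\gamma$ is a nearly optimal pursuit, with $\sup_t d(\gamma(t),\mu(t))$ equal, up to bounded metric distortion and an additive error of order $(Kr+1)\,d(x,y)$, to the maximal transverse holonomy displacement of the initial separation of $x$ and $y$ along $\gamma$. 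I would prove this from the local product structure of foliation charts together with compactness to make the distortion constants uniform; the additive $(Kr+1)\,d(x,y)$ term, being polynomial in $r$, is negligible for the entropy exactly as in Theorem \ref{thm:vector}.

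With this lemma both inequalities follow. For the lower bound $h_{GLW} \le H$, a family of transversal points that are $(R,\epsilon)$-separated by holonomy yields, via the pursuit lemma, a family of points in $M$ that are $\epsilon'$-$D_r$-separated with $r$ proportional to $R$: the separating holonomy is precisely the transverse displacement that no pursuer in $\mathcal{P}(y,\infty)$ can undo, since it cannot leave its leaf. For the upper bound $h \le h_{GLW}$, the lemma shows conversely that $d_r$-separation of two points reduces, up to bounded and polynomial-in-$r$ corrections, to transverse holonomy separation along plaque chains of length at most $Cr$; points that are leaf-wise close are kept at bounded $d_r$-distance by Lemma \ref{lemma:estimate}, so an $\epsilon$-$d_r$-separated set in $M$ has cardinality at most a fixed multiple of a GLW separation number at length $Cr$. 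Taking logarithms, dividing by $r$, and letting $r \to \infty$ and then $\epsilon \to 0$ gives $h \le h_{GLW} \le H$; combined with $H \le h$ this forces all three quantities to coincide.

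The step I expect to be the main obstacle is establishing the pursuit lemma with genuinely uniform constants and controlling the interface between leaf-wise and transverse motion. One must verify that the optimal pursuer gains nothing by clever leaf-wise maneuvering, which rests on the fact that it cannot change leaves, so its transverse fate is sealed by holonomy; and one must handle the purely local nature of holonomy maps through the plaque-chain bookkeeping that converts leaf-wise length into holonomy length and back. These are standard features of foliation theory on compact manifolds, but assembling them into the two clean comparison inequalities above, with the polynomial error terms quarantined away from the exponential rate, is where the real work lies.
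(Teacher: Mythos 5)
Your proposal is correct in outline and reaches the result by the same overall strategy the paper intends --- identify a nearly optimal pursuit path, show that the optimal pursuer gains nothing beyond what the transverse geometry of $\mathcal{F}$ permits, quarantine the polynomial-in-$r$ error terms, and then match the resulting separation numbers against one of the equivalent definitions of $h_{GLW}$ --- but your key lemma is different from the paper's. The paper's proof (which is only a two-sentence sketch deferring to the proof of Theorem \ref{thm:vector}) takes the nearly optimal pursuit to be the \emph{orthogonal projection} of the path $\gamma$ onto the leaf through $y$, and then invokes the fact that exactly this leafwise-distance/projection formulation appears among the equivalent definitions of geometric entropy in \cite{GLW}; this is metrically global and avoids any plaque bookkeeping. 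You instead transport $y$ along $\gamma$ by the \emph{holonomy} of a plaque chain and compare against the holonomy-pseudogroup definition of $h_{GLW}$, which forces you to (i) convert between leafwise length $r$ and plaque-chain length $R$ (fine, by compactness), and (ii) handle the fact that holonomy maps are only locally defined --- your pursuit path may cease to exist once the transverse displacement leaves the domain of the holonomy of the chart chain. That second point is not fatal (if the holonomy image escapes, the separation has already exceeded the relevant $\epsilon$ and the points are separated anyway), but it is a genuine extra case you must argue, and it is precisely what the orthogonal-projection pursuit sidesteps. Both routes ultimately lean on the equivalence of definitions established in \cite{GLW}; yours re-proves a larger share of that equivalence by hand, which makes it longer but more self-contained, while the paper's choice of projection makes the comparison with $h_{GLW}$ essentially tautological once the "no clever leafwise maneuvering helps" claim is granted. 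One small caution on your lower bound: the assertion that the pursuer's "transverse fate is sealed by holonomy" requires the pursuer to stay within the Lebesgue-number scale of the foliation cover of $\gamma$ at \emph{all} times (otherwise it is already separated at a definite scale); you should state this dichotomy explicitly rather than treat the holonomy image as the only possible transverse position.
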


See \cite{GLW,Walczak-Foliations} for formulas and properties of geometric entropy of foliations. The proof of 
Theorem \ref{thm:foliation} is absolutely similar to the proof of Theorem \ref{thm:vector}: one can show that
the ``nearly optimal puirsuit'' is obtained by orthogonal projection of a path (starting from $x$) on a leaf
(passing through $y$), and that orthogonal projection is used in one of the equivalent definitions 
of geometric entropy in \cite{GLW}.

Another basic property of our entropy is the following additivity.
Let ${\mathcal G_1} = (M_1, A_1, \|.\|_1, \sharp_1)$ and ${\mathcal G_2} = (M_2, A_2, \|.\|_2, \sharp_2)$ 
be two arbitrary  geometric structures. Denote by ${\mathcal G_1}\oplus{\mathcal G_2}$ their 
{\bf direct sum} $(M_1 \times M_2, A_1 \oplus A_2,  \max( \|.\|_1, \|.\|_2), \sharp_1 \oplus \sharp_2)$ 
(here we use the \emph{max norm}, i.e. the maximal of the norms of the two components 
as the norm on the direct sum).

\begin{thm}[Additivity] We have
\begin{equation}
h(\mathcal{G}_1 \oplus \mathcal{G}_2) =  h(\mathcal{G}_1) + h(\mathcal{G}_2).
\end{equation}
\end{thm}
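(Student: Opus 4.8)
The plan is to exploit the product structure created by the \emph{max norm}, which makes the entire construction factor through the two components. First I would observe that the constraint $\max(\|.\|_1,\|.\|_2)\le r$ is equivalent to requiring $\|.\|_1\le r$ and $\|.\|_2\le r$ simultaneously, so that a pair $(\gamma_1,\gamma_2)$ is an $(A_1\oplus A_2)$-path of speed at most $r$ from $(x_1,x_2)$ if and only if each $\gamma_i$ is an $A_i$-path of speed at most $r$ from $x_i$; in other words $\mathcal{P}^{\mathcal{G}_1\oplus\mathcal{G}_2}((x_1,x_2),r)=\mathcal{P}^{\mathcal{G}_1}(x_1,r)\times\mathcal{P}^{\mathcal{G}_2}(x_2,r)$. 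Since the entropy does not depend on the chosen Riemannian metric (up to bi-Lipschitz equivalence, as established above), I am free to take $d=\max(d_1,d_2)$ on $M_1\times M_2$, where $d_i$ is a metric on $M_i$.

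With these two choices the key computation is the identity
\begin{equation}
\delta_r^{\mathcal{G}_1\oplus\mathcal{G}_2}((x_1,x_2),(y_1,y_2))=\max\left(\delta_r^{\mathcal{G}_1}(x_1,y_1),\,\delta_r^{\mathcal{G}_2}(x_2,y_2)\right),
\end{equation}
which I would prove by pushing the $\max$ successively through the $\sup_t$, the $\inf_\mu$, and the $\sup_\gamma$. The only nontrivial point is the elementary fact that, for functions of two \emph{independent} variables, $\inf_{(\mu_1,\mu_2)}\max(F_1(\mu_1),F_2(\mu_2))=\max(\inf_{\mu_1}F_1,\inf_{\mu_2}F_2)$ and likewise for $\sup$, both holding because the two arguments can be optimized separately. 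Symmetrizing yields the comparison
\begin{equation}
\max(d_r^{\mathcal{G}_1},d_r^{\mathcal{G}_2})\le d_r^{\mathcal{G}_1\oplus\mathcal{G}_2}\le d_r^{\mathcal{G}_1}+d_r^{\mathcal{G}_2}.
\end{equation}

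Next I would convert this into a comparison of separated numbers: for the max-metric a product of separated sets is separated, for the sum-metric a product of spanning sets is spanning, and combining these with the standard inequalities between separating and spanning numbers gives, for every $r$ and $\epsilon$,
\begin{equation}
N(d_r^{\mathcal{G}_1},\epsilon)\,N(d_r^{\mathcal{G}_2},\epsilon)\le N(d_r^{\mathcal{G}_1\oplus\mathcal{G}_2},\epsilon)\le N(d_r^{\mathcal{G}_1},\epsilon/4)\,N(d_r^{\mathcal{G}_2},\epsilon/4).
\end{equation}
Taking logarithms, dividing by $r$, using subadditivity of $\limsup$ on the right, and letting $\epsilon\to0$ immediately gives the upper bound $h(\mathcal{G}_1\oplus\mathcal{G}_2)\le h(\mathcal{G}_1)+h(\mathcal{G}_2)$.

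The hard part will be the reverse inequality. The left bound gives $\frac1r\ln N(d_r^{\mathcal{G}_1\oplus\mathcal{G}_2},\epsilon)\ge\frac1r\ln N(d_r^{\mathcal{G}_1},\epsilon)+\frac1r\ln N(d_r^{\mathcal{G}_2},\epsilon)$, but $\limsup$ is only \emph{subadditive}, so one cannot simply add the two growth rates, since the two factors might realize them along different sequences of $r$. To repair this I would show that each rate is a genuine limit rather than a mere $\limsup$. Rescaling time (a speed-$r$ path on $[0,1]$ becomes a speed-$1$ path on $[0,r]$, exactly the reparametrization already used) recasts $d_r$ as a Bowen-type ``watch until time $r$'' metric, for which I expect the spanning numbers to be submultiplicative up to a fixed constant, so that $\ln N(d_r^{\mathcal{G}_i},\epsilon)$ is subadditive in $r$ and Fekete's lemma forces $\lim_r\frac1r\ln N(d_r^{\mathcal{G}_i},\epsilon)$ to exist. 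Once the limits exist, the $\limsup$ of the sum equals the sum of the limits and the lower bound follows; the same argument applies verbatim to $H$. The genuinely delicate step, and the place I expect the real work to lie, is establishing this submultiplicativity for the \emph{pursuit} quantity $\delta_r$: splitting a path at an intermediate time and letting the pursuer shadow first over $[0,T]$ and then over $[T,T+S]$ leaves it beginning the second leg only \emph{near}, not exactly at, the correct point, and closing this gap requires uniform continuity estimates of the kind used in Lemma~\ref{lemma:estimate}.
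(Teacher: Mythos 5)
Your proposal follows essentially the same route as the paper's proof: take the max metric on $M_1\times M_2$, observe that the max norm makes $A$-paths and the speed constraint factor through the two components so that $\delta_r^{\mathcal{G}_1\oplus\mathcal{G}_2}=\max\left(\delta_r^{\mathcal{G}_1},\delta_r^{\mathcal{G}_2}\right)$, and then compare separated sets in the product with products of separated sets. (Your two-sided bound $\max\left(d_r^{\mathcal{G}_1},d_r^{\mathcal{G}_2}\right)\le d_r^{\mathcal{G}_1\oplus\mathcal{G}_2}\le d_r^{\mathcal{G}_1}+d_r^{\mathcal{G}_2}$ is actually more accurate than the paper's asserted equality $d_r^{\mathcal{G}_1\oplus\mathcal{G}_2}=\max\left(d_r^{\mathcal{G}_1},d_r^{\mathcal{G}_2}\right)$, since the symmetrization $\delta_r(x,y)+\delta_r(y,x)$ does not commute with $\max$; the discrepancy is a harmless factor of $2$.) Your counting inequalities match the paper's up to the constants in $\epsilon$, and the upper bound $h(\mathcal{G}_1\oplus\mathcal{G}_2)\le h(\mathcal{G}_1)+h(\mathcal{G}_2)$ is obtained identically.

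Where you differ is at the very last step, and your worry is well founded: the paper simply states that the theorem ``follows directly'' from the two counting inequalities, but as you note, the lower bound $h(\mathcal{G}_1\oplus\mathcal{G}_2)\ge h(\mathcal{G}_1)+h(\mathcal{G}_2)$ does not follow formally because $\limsup$ is subadditive, not superadditive --- the two growth rates could a priori be attained along disjoint sequences of $r$. Your proposed remedy (show each $\frac{1}{r}\ln N(d_r^{\mathcal{G}_i},\epsilon)$ is a genuine limit via approximate submultiplicativity in $r$ and Fekete's lemma, exactly as in the classical proof that topological entropy is additive under products) is the standard one, but you leave the submultiplicativity itself as an expectation, and you are right that it is the delicate point: concatenating pursuit strategies returns the pursuer only to a point \emph{near} the correct restart position, so closing the gap needs a uniform estimate of the kind in Lemma~\ref{lemma:estimate}. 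So your write-up is no less complete than the paper's --- it reproduces the paper's argument and, in addition, correctly isolates the one step the paper glosses over --- but to turn it into a full proof you would still have to carry out that Fekete argument (or otherwise upgrade one of the two $\limsup$'s to a $\liminf$ along the relevant sequence).
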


\begin{proof}
 Fix metrics $d_1$ and $d_2$ on $M_1$ and $M_2$, and use the max metric $\max(d_1,d_2)$ on the product 
$M_1 \times M_2$ (this last metric is not Riemiannian, but it doesn't matter). Then
$$ \delta_r^{\mathcal{G}_1 \oplus \mathcal{G}_2} = \max (\delta_r^{\mathcal{G}_1}, \delta_r^{\mathcal{G}_2}),$$
because the pursuits in the two components are independent, and the escape in the product manifold is the maximal
of the two escapes in the two components. Therefore
$$ d_r^{\mathcal{G}_1 \oplus \mathcal{G}_2} = \max (d_r^{\mathcal{G}_1}, d_r^{\mathcal{G}_2}).$$
It follows that $N^{\mathcal{G}_1 \oplus \mathcal{G}_2}(d_r,\epsilon)$ is comparable to
$N^{\mathcal{G}_1}(d_r,\epsilon) \times N^{\mathcal{G}_2}(d_r,\epsilon).$ Indeed, if there is a set $S_1$ of
$\epsilon$--$d_r^{\mathcal{G}_1}$--separated points in $M_1$ and a set $S_2$ of
$\epsilon$--$d_r^{\mathcal{G}_2}$--separated points in $M_2$, then their direct product will be a set of
$\epsilon$--$d_r^{\mathcal{G}_1 \oplus \mathcal{G}_2}$--separated points in $M_1 \times M_2$, therefore 
$$N^{\mathcal{G}_1 \oplus \mathcal{G}_2}(d_r,\epsilon) \geq 
N^{\mathcal{G}_1}(d_r,\epsilon) \times N^{\mathcal{G}_2}(d_r,\epsilon).$$
Conversely, for each 
domain $U$ of $M_1$ of $d_r$--diameter smaller than $\epsilon$, there are at most $N^{\mathcal{G}_2}(d_r,\epsilon)$
$\epsilon$--$d_r^{\mathcal{G}_1 \oplus \mathcal{G}_2}$--separated points in $U \times M_2$. The minimal number of 
domains of $d_r$--diameter smaller than $\epsilon$ needed to cover $M_1$ is bounded above by
$N^{\mathcal{G}_1}(d_r,\epsilon/2)$, therefore
$$N^{\mathcal{G}_1 \oplus \mathcal{G}_2}(d_r,\epsilon) \leq 
N^{\mathcal{G}_1}(d_r,\epsilon/2) \times N^{\mathcal{G}_2}(d_r,\epsilon).$$
The theorem follows directly from these two inequalities.
\end{proof}

\begin{remark}
The above additivity property is analogous to the well-known additivity of Clausius--Boltzmann entropy in physics and 
the Shannon entropy in information theory. If instead of the max norm, we use another norm on the direct sum
(for example the sum norm), then instead of the additivity we may get some kind of sub-additivity for the entropy.
\end{remark}

\begin{remark}
One can probably have an upper bound for $h(\mathcal{G})$ by some kind of  ``maximal Lyapunov exponent'' or 
``sum of positive Lyapounov exponents'' of the geometric structure, similarly to the case of 
dynamical systems \cite{Pesin_LyapunovExponent}, but we will not enter that direction in this paper. 
\end{remark}

\section{Entropy of Poisson structures}

To define the entropy of a Poisson structure, we need an additional ingredient, namely a norm on the cotangent bundle
of the manifold (or a Riemannian metric on the manifold). As we mentioned earlier, the entropy mainly
reflects the transverse structure of the leaves created by $A$-paths, which are nothing but the symplectic leaves
of our Poisson structure in this case. So if the associated characteristic foliation by symplectic leaves 
(see, e.g., Chapter 1 of \cite{DufourZung-Poisson}) is ``complicated,
twisted'' then we would expect positive entropy, while if this foliation is ``simple'' then we would expect zero entropy.

It is easy to construct Poisson structure with non-zero entropy. For example, let $X$ be a
vector field on a manifold $N$. Put $M = N \times \bbS^1$, and 
\begin{equation}
\Pi_X = X \wedge \partial / \partial q 
\end{equation}
where $q$ is the periodic coordinate (modulo 1) on $\bbS^1.$ Then $\Pi_X$ is a Poisson structure of rank 2 on $M$.
Let $\|.\|_1$ be an arbitrary norm on $T^*N$, $\|.\|_2$ be the standard norm on $T^* \bbS^1$ such that $\|dq\|_2 = 1$, and
denote by $\|.\|$ the max norm on $T^*M$ generated by $\|.\|_1$ and $\|.\|_2$. Denote by $\mathcal{G}$ the geometric 
structure associated to $\Pi_X$ and this max norm on $M$. 

\begin{thm}
We have $h(\mathcal{G}) = H(\mathcal{G}) = 2 h_{top}(X)$. 
In particular,  $h(\mathcal{G}) \neq 0$ if and only if $h_{top}(X) \neq 0.$  
\end{thm}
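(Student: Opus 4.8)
The plan is to reduce the computation to the vector-field case already settled in Theorem \ref{thm:vector}, by showing that the $A$-dynamics of $\mathcal{G}$ splits into an $X$-dynamics on the $N$-factor, which carries all the entropy, and a ``free'' motion along the compact circle factor, which carries none. First I would write a covector on $M=N\times\mathbb{S}^1$ as $\alpha=\alpha_N+\lambda\,dq$ and compute the anchor of $\Pi_X=X\wedge\partial/\partial q$, namely $\sharp(\alpha)=\langle\alpha_N,X\rangle\,\partial/\partial q-\lambda X$. The crucial structural observation is that, because we use the max norm, the constraint $\|\alpha\|\le r$ means $\|\alpha_N\|_1\le r$ and $|\lambda|\le r$ \emph{independently}: the scalar $\lambda$ alone governs the $N$-velocity $-\lambda X$, while $\alpha_N$ alone governs the $q$-velocity $\langle\alpha_N,X\rangle$. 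Hence the $N$-projection of any $A$-path of $A$-speed $\le r$ is exactly a $\mathcal{G}_X$-path of $A$-speed $\le r$ (a piece of an $X$-orbit run back and forth), and conversely every $\mathcal{G}_X$-path lifts, taking $\alpha_N=0$, to an $A$-path of $\mathcal{G}$ with constant $q$-coordinate. By Theorem \ref{thm:vector} the vector-field structure $\mathcal{G}_X$ on the compact manifold $N$ satisfies $h(\mathcal{G}_X)=H(\mathcal{G}_X)=2h_{top}(X)$, so it suffices to prove $h(\mathcal{G})=H(\mathcal{G})=h(\mathcal{G}_X)$. Throughout I would use the max metric $\max(d_N,d_{\mathbb{S}^1})$ on $M$, so that the projection $M\to N$ is distance non-increasing.

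For the lower bound $H(\mathcal{G})\ge 2h_{top}(X)$, which by $H\le h$ also bounds $h$ from below, I would fix $p_0\in\mathbb{S}^1$ and embed an $\epsilon$-$D_r^{\mathcal{G}_X}$-separated set $\{x_i\}\subset N$ as $\{(x_i,p_0)\}\subset M$. Given an escape $\gamma$ from $(x_i,p_0)$ whose $N$-projection is a near-optimal $\mathcal{G}_X$-escape, any pursuit $\mu$ from $(x_j,p_0)$, even with unlimited $A$-speed, projects to a $\mathcal{G}_X$-pursuit $\mu_N$, and since $d_M\ge d_N$ we get $\sup_t d_M(\gamma,\mu)\ge\sup_t d_N(\gamma_N,\mu_N)$. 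This yields $\Delta_r^{\mathcal{G}}\ge\Delta_r^{\mathcal{G}_X}$ and hence $D_r^{\mathcal{G}}\ge D_r^{\mathcal{G}_X}$ on the embedded points, so $N^{\mathcal{G}}(D_r,\epsilon)\ge N^{\mathcal{G}_X}(D_r,\epsilon)$ and $H(\mathcal{G})\ge H(\mathcal{G}_X)=2h_{top}(X)$.

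For the upper bound $h(\mathcal{G})\le 2h_{top}(X)$ I would design the pursuit to track both factors at once. Using $\lambda$ the pursuer follows a near-optimal $\mathcal{G}_X$-pursuit in $N$, keeping the $N$-gap at most $\delta_r^{\mathcal{G}_X}(x,y)$; using the \emph{independent} budget $\alpha_N$ it simultaneously chases the escaper's $q$-coordinate. Wherever $X$ is bounded away from zero the pursuer's attainable $q$-speed is of order $r$, so any residual $q$-gap, at most the diameter of $\mathbb{S}^1$, is closed in time $O(1/r)$ without disturbing the $N$-pursuit. The only places where $q$ cannot be tracked are near the zeros of $X$; but there the $N$-velocity $-\lambda X$ and the $q$-velocity $\langle\alpha_N,X\rangle$ vanish together, so the escaper is itself nearly frozen in both coordinates and produces no escape. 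Consequently two points with $d_r^{\mathcal{G}_X}$-close $N$-projections can differ in $d_r^{\mathcal{G}}$ by at most a constant of the order of the circle's diameter, independent of $r$. Grouping an $\epsilon$-$d_r^{\mathcal{G}}$-separated set by its $N$-projection then gives $N^{\mathcal{G}}(d_r,\epsilon)\le C(\epsilon)\,N^{\mathcal{G}_X}(d_r,\epsilon/2)$, where $C(\epsilon)$ counts a covering of $\mathbb{S}^1$ and does not depend on $r$. Taking logarithms, dividing by $r$ and letting $r\to\infty$ kills the factor $C(\epsilon)$, so $h(\mathcal{G},\epsilon)\le h(\mathcal{G}_X)$ and hence $h(\mathcal{G})\le 2h_{top}(X)$. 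Combined with the lower bound and $H\le h$ this gives $h(\mathcal{G})=H(\mathcal{G})=2h_{top}(X)$.

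The main obstacle is precisely the $q$-tracking step in the upper bound: one must check that the $r$-independent bound on the $q$-contribution survives near the singular set of $X$, where the pursuer loses its ability to move along the circle. The resolution I would push on is the synchronization noted above, since the $N$- and $q$-mobilities both degenerate exactly like $|X|$, an escaper can only exploit the circle where it is simultaneously able to escape in $N$, and a static $q$-gap over the zero set of $X$ contributes only a bounded, $r$-independent multiplicative factor to the count of separated points, which is invisible to the entropy. This is also the step where Lemma \ref{lemma:estimate} can be invoked to package the cheap, at most linear in $r$, cost of maneuvering along the controllable circle direction.
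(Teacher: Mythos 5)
Your overall strategy --- reduce to Theorem \ref{thm:vector} by splitting the anchor $\sharp(\alpha_N+\lambda\,dq)=\langle\alpha_N,X\rangle\,\partial/\partial q-\lambda X$ into an $N$-component governed by $\lambda$ and a $q$-component governed by $\alpha_N$ --- is exactly the reduction the paper intends (the paper gives no details beyond saying the proof is ``absolutely similar'' to that of Theorem \ref{thm:vector}), and your lower bound $H(\mathcal{G})\ge 2h_{top}(X)$, obtained by lifting escapes at constant $q$ and projecting pursuits, is correct. The gap is in the upper bound, precisely at the step you yourself flag as the main obstacle, and the resolution you propose does not work. You argue that near the zeros of $X$ ``the escaper is itself nearly frozen in both coordinates,'' so that the circle factor contributes only an $r$-independent factor $C(\epsilon)$. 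But the escaper's $q$-speed is of order $r\|X(\gamma_N(t))\|$: for a point $x$ with $\|X(x)\|=c>0$ arbitrarily small, once $r\gg 1/c$ the escaper can wind all the way around $\mathbb{S}^1$, while a pursuer sitting at a zero $x_0$ of $X$ with $d(x,x_0)$ small is completely immobile, since its anchor vanishes identically. Concretely, let $X$ have a center-type zero $x_0$ (closed orbits of radius $\rho$ around $x_0$, with $\|X\|\sim\rho$). The points $(x_k,p_0)$ with $x_k$ at radii $\rho_k=k/r$, $k\le \epsilon r/10$, all lie within $d_r^{\mathcal{G}_X}$-distance $O(\epsilon)$ of one another for every $r$ (each can copy the other's angular motion, and the radius is invariant), yet any two of them are $\tfrac12$-separated in $d_r^{\mathcal{G}}$: the one on the larger orbit winds around the circle factor at a rate exceeding by $r(\rho_k-\rho_j)\ge 1$ anything the other, confined to its own leaf, can achieve, so the $q$-gap sweeps through the antipode within time $1$. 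Hence $N^{\mathcal{G}}(d_r,\epsilon)/N^{\mathcal{G}_X}(d_r,\epsilon/2)$ grows at least linearly in $r$, and your inequality with an $r$-independent $C(\epsilon)$ is false.

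What the upper bound actually requires is a control of this differential-winding effect: two points whose $N$-projections are pursuit-close at scale $\epsilon$ can still have $q$-mobilities differing by order $rL\epsilon$, so preventing $q$-separation needs shadowing in $N$ at scale $\epsilon/r$, not $\epsilon$. One must show that this refinement costs only a sub-exponential (e.g.\ polynomial in $r$) multiplicative factor in the count of separated points --- the analogue of the $(Kr+1)\,d(x,y)$ term that the proof of Theorem \ref{thm:vector} constructs explicitly and then discards. That estimate is the real content of the inequality $h(\mathcal{G})\le 2h_{top}(X)$; your write-up asserts its conclusion but the heuristic offered in its place is contradicted by the example above.
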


The proof is straightforward and absolutely similar to the proof of Theorem \ref{thm:vector}.

There is a general idea, according to which, a dynamical system which is integrable in some natural dynamical sense
(e.g. Liouville integrability for Hamiltonian systems) must have zero entropy (outside a pathological singular invariant 
set where the system is ``not very integrable''). 
For  Poisson structures there is also a notion of integrability. However, integrability of Poisson structure (in the sense
that it can be integrated into a symplectic groupoid, see e.g. \cite{CF-PoissonIntegrable}) is not a dynamical property
but rather a geometric property. So it is not surprising that there are Poisson structures which are integrable but which
admit non-zero entropy, and on the other hand there are Poisson structures which are not integrable but which have
zero entropy.

\begin{example}
 The rank-2 Poisson structure $\Pi_X = X \wedge \partial / \partial q$ above is integrable, but will have non-zero
entropy if the vector field $X$ has non-zero entropy.
\end{example}

\begin{example}
 The Poisson structure 
\begin{equation}
 (x_1^2 + x_2^2 + x_3^2)(x_1 \partial x_2 \wedge \partial x_3 +
x_2 \partial x_3 \wedge \partial x_1 + x_3 \partial x_1 \wedge \partial x_2)
\end{equation}
 on $\bbR^3$ is non-integrable, but its symplectic
foliation is very simple (spheres centered at the origin), and it has zero entropy (with respect to any norm).
\end{example}

The non-vanishing of the entropy for Poisson structures is also a non-linear phenomenon: \emph{any linear 
Poisson structure has zero entropy with respect to any norm}. We will leave this last
statement as a conjecture. (Indication: use the fact that there is a complete set of rational
invariant functions for the singular foliation into coadjoint orbits on the dual of a Lie algebra. The proof
of this conjecture is a simple excercise for Lie algebras of compact type, and we believe that the conjecture is
also true for any other finite-dimensional Lie algebra).

\vspace{0.5cm}

{\bf Acknowledgements}. This work was first presented as part of an invited talk at the conference 
``Poisson 2010: Poisson geometry in mathematics and physics'' held at IMPA, Rio de Janeiro, July/2010. 
We would like to thank the organizers of the conference, especially Rui Fernandes and Henrique Bursztyn, 
for the invitation and financial and moral support. We would also like to thank  Alain Connes 
(for telling us the history of Boltzmann and how people mocked at his entropy formula, and the story 
about entropy of food),  Yvette Kosmann-Schwarzbach (for letting us know about Clausius),  
Frank Michael Forger (for his question about the additivity property, which leads to the additivity theorem 
in this paper),  Alexey Bolsinov, Jean Pierre Marco, Jean-Pierre Ramis, and many other colleagues for interesting
disscusions about entropy. I'm also thankful to the referees of this paper for very useful critical remarks.


\begin{thebibliography}{50}
\parskip0.0cm


\bibitem{Bis-Contact}
A. Bi\'s, \emph{Entropy of distributions}, Topology Appl. 152, No. 1--2 (2005), 2--10.

\bibitem{CF-PoissonIntegrable}
M. Crainic, R.L. Fernandes, \emph{Integrability of Poisson brackets}, J. Differential Geom. Vol. 66, Number 1 (2004), 
71--137.

\bibitem{Dinaburg-Entropy}
E. I. Dinaburg,\emph{On the relations among various entropy characteristics of dynamical systems}, 
Izv. Akad. Nauk SSSR 35 (1971), 324--366 (Math. USSR Izvestija 5 (1971), 337--378).

\bibitem{DufourZung-Poisson}
J.-P. Dufour, N.T. Zung, Poisson structures and their normal forms, Progress in Mathematics, Vol. 242, Birkh\"auser, 2005.

\bibitem{GLW}
E. Ghys, R. Langevin, P. Walczak, \emph{Entropie géométrique des feuilletages}, Acta Math. 160 (1988), 105--142.

\bibitem{Gromov-BallBox}
M. Gromov,  \emph{Carnot-Carathéodory spaces seen from within}, 
in: Sub-Riemannian geometry, Progress in Mathematics Vol. 144,
1996, A. Bellaiche and J.-J. Risler eds, 85--324.

\bibitem{Pesin_LyapunovExponent}
Y.B. Pesin, \emph{Characteristic Lyapunov Exponents and Smooth Ergodic Theory}, Russian Math. Surveys  32 (1977), No. 4,
55--114.

\bibitem{Walczak-Foliations}
P. Walczak, Dynamics of Foliations, Groups and Pseudogroups, Monografie Matematyczne, Vol. 64, Birkh\"auser, Basel, 2004.


\end{thebibliography}
\end{document}